\newtheorem{thm}{Theorem}[section]
\newtheorem{prop}[thm]{Proposition}
\newtheorem{defn}{Definition}[section]
\title{%
    On the primitive subspace of the Lando framed graph bialgebra
    }
\author{%
    Maksim Karev
    }
\abstract{%
    Lando framed graph bialgebra is generated by framed graphs modulo 4-term relations. We provide an explicit set of generators of its primitive subspace and a description of the set of relations between the generators. We also define an operation of leaf addition that endows the primitive subspace of Lando algebra with a structure of a module over the ring of polynomials in one variable and construct a 4-invariant that satisfies a simple identity with respect to the vertex-multiplication.
    }
\keywords{%
   Knot invariants, Graph invariants, 4-term relations, Weight system
    }
\begin{document}



\section*{Introduction}

The theory of finite-type knot invariants was first proposed by V. Vassiliev~\cite{Va}, who introduced a filtration on the space of knot invariants with finite-dimensional components. A similar filtration on invariants of plane curves was later proposed by Arnold in~\cite{Arnold}. In both cases, the graded vector spaces associated with these filtrations can be realized as subspaces of the duals to the finite-dimensional spaces of \emph{chord diagrams}~\cite{BL, Lando}, with the framing added in the case of plane curves. These associated graded spaces are known as the spaces of (framed) weight systems.

Despite significant progress, the study of the space of the finite type invariants for knots and plane curves remains incomplete. For example, even the dimensions of the spaces of weight systems are only known for the first few terms \cite{KarevHopf, Sloane1}. However, the existence of the structure of a commutative cocommutative connected bialgebra of a finite type, rich combinatorics, and its unexpected relations to other mathematical concepts (e.g. Lie algebras) make these spaces an extremely interesting object of study.

One such relation is the existence of a map from a dual of a quotient of the algebra generated by graphs on $n$ vertices to the degree $n$ grading component of the space of weight systems. This map was first described in~\cite{LandoHopf} and then extended to the framed case in ~\cite{Lando}. The above-mentioned quotient of the algebra of graphs is referred to as the Lando framed graph bialgebra, or simply the Lando bialgebra. The dimensions of the graded components of the Lando bialgebra are not known in general, see \cite{Sloane2, Sloane3}. The linear functions on the Lando bialgebra are known as framed 4-invariants. The state of the art of the study of 4-invariants can be found in~\cite{LandoKazaryan}.

The theory of weight systems is more developed than the theory of 4-invariants, with three different realizations of the space of weight systems (algebras $\mathcal A, \mathcal B$ and $\mathcal C$ of \cite{CDbook}), each with its advantages and disadvantages. For two of these realizations, the description of the corresponding primitive subspace is direct in the sense that the generators and relations are known. However, for the Lando bialgebra, the current state-of-the-art description of the primitive subspace is not direct – we can only say that it is generated by images of the generators of the graph space under the projection operator (see ~\cite{LandoHopf}).

In this note, we introduce a graph-theoretic analogue of algebra $\mathcal C$. Our proposed construction gives a more direct description of its primitive subspace in terms of generators and relations. We also introduce a graph-theoretic counterpart of the well-known operation of bubble insertion and propose a new 4-invariant.

This note is dedicated to the memory of S.V. Duzhin, who introduced the theory of finite-type knot invariants to the author.  I am grateful to B. Bychkov for valuable discussions and D. Fomichev for implementing computer codes to verify the constructions in this note. I am also grateful to Jacob Mostovoy, and the anonymous referee, whose comments allowed me to make the text clearer.

Below, $\mathbb K$ is a characteristic 0 field.

\section{Bialgebra structures on the spaces generated by isomorphism classes of framed finite simple graphs}

\begin{defn}
The framing on a finite graph is a function $f$ from the set of its vertices to $\mathbb F_2$.
\end{defn}
 The framed graphs have a naturally defined notion of framing preserving isomorphism. We will refer to the equivalence classes of framed finite simple graphs, as just framed graphs. 
 
 S.A.~Joni and G.C.~Rota~\cite{JoniRota} have proposed to endow the vector space spanned by the framed graphs with the structure of a commutative cocommutative connected bialgebra of finite type over $\mathbb K$: 
 
\begin{defn}
The graded bialgebra $G_\mathcal{JR}$ over $\mathbb K$ is spanned by framed graphs. The grading is given by the number of vertices of the graph. The product on $G_\mathcal {JR}$ is the extension by linearity of the disjoint union of graphs. The unit element is the empty graph, the counit element maps the empty graph to 1, and all the other graphs to 0. The coproduct is given by
$$\Delta_\mathcal{JR} (\Gamma) = \sum \Gamma_p\otimes \Gamma_q,$$
where the sum is taken over all possible ways to split the set of vertices of $\Gamma$ into two disjoint subsets $p$ and $q$. We denote by $\Gamma_p$ ($\Gamma_q$, respectively),  the full subgraph of $\Gamma$ generated by the set of vertices $p$ ($q$, respectively).
\end{defn} 
 
We define the following algebra closely related to $G_\mathcal{JR}$.

\begin{defn}
Let $\Gamma$ be a framed graph. A \emph{coloring} on the edges of $\Gamma$ is the function $C\colon E(\Gamma) \to \{b,r\}$. 
\end{defn}

Below we will refer to the edges with coloring $b$ (coloring $r$, respectively) as to black (respectively, red) edges\footnote{We follow \cite{L} in the choice of the colors.}.

We define the following commutative cocommutative connected bialgebra of finite type structure on the vector space spanned by the colored framed graphs:

\begin{defn}
The graded bialgebra $G_\mathcal{C}$ over $\mathbb K$ is spanned by colored framed graphs. The grading is given by the number of vertices of the graph. The product on $G_\mathcal {C}$ is the extension by the linearity of the disjoint union of graphs. The unit element is the empty graph, the counit element maps the empty graph to 1, and all other graphs to 0. The coproduct is given by
$$\Delta_\mathcal{C} (\Gamma) = \sum \Gamma_p\otimes \Gamma_q,$$
where the sum is taken over all possible ways to split the set of vertices of graph $\Gamma$ into two disjoint subsets $p$ and $q$, such that \emph{no vertex from $p$ is connected to a vertex from $q$ by a red edge}. We denote by $\Gamma_p$ ($\Gamma_q$, respectively),  the full subgraph of $\Gamma$ generated by the set of vertices $p$ ($q$, respectively), respecting the coloring of the edge.
\end{defn}

The proof that the introduced operations define a bialgebra structure is a routine verification of the axioms, which we omit.

The algebra $G_\mathcal{JR}$ admits an injective graded bialgebra map $\iota$ to the algebra $G_\mathcal{C}$. Namely, every framed graph is mapped to itself all the edges colored black. 

In this note, we use the following convention for visualizing the elements of $G_\mathcal{C}$. The color of an edge of a depicted graph corresponds to the value of the coloring function on it. Capital letters indicate the framings of the vertices. Small letters stand for subsets of vertices of $G_\mathcal{C}$; the subsets corresponding to different letters are not necessarily disjoint. A small letter written on an edge endpoint indicates that, besides the edges that are drawn explicitly, this vertex is also connected to each vertex from the subset denoted by that letter. The parts of the graphs that are omitted from the picture are assumed to be the same.

We define $I_\mathcal{C}$ to be the ideal of $G_\mathcal{C}$ spanned by all the possible elements of the form:

\begin{center}

\begin{tikzpicture}
		\node at (0,2) (edgesx) {$x$};
        \node at ( 0,1) (vertexu) [draw] {$A$};
        \node at (0,0) (vertexv) [draw] {$B$};
        \node at (0,-1) (edgesy) {$y$};
        
        \draw (edgesx) -- (vertexu);
      	\draw (vertexu) -- (vertexv) [color = red];
      	\draw (vertexv) -- (edgesy);
\end{tikzpicture}
\raisebox{47pt}{$-$}
\begin{tikzpicture}
		\node at (0,2) (edgesx) {$x$};
        \node at ( 0,1) (vertexu) [draw] {$A$};
        \node at (0,0) (vertexv) [draw] {$B$};
        \node at (0,-1) (edgesy) {$y$};
        
        \draw (edgesx) -- (vertexu);
      	\draw (vertexu) -- (vertexv);
      	\draw (vertexv) -- (edgesy);
\end{tikzpicture}
\raisebox{47pt}{$+$}
\begin{tikzpicture}
		\node at (0,2) (edgesx) {$x$};
        \node at ( 0,1) (vertexu) [draw] {$A$};
        \node at (0,0) (vertexv) [draw] {$B$};
        \node at (0,-1) (edgesy) {$y$};
        
        \draw (edgesx) -- (vertexu);
      	\draw (vertexv) -- (edgesy);
\end{tikzpicture}
\raisebox{47pt}{.}

\end{center}

\begin{thm}
The inclusion $\iota\colon G_\mathcal{JR} \to G_\mathcal{C}$ gives rise to a bialgebra isomorphism $\phi\colon G_\mathcal{JR} \to G_\mathcal{C}/I_\mathcal{C},$ where the bialgebra structure on $G_\mathcal{C}/I_\mathcal{C}$ is induced from $G_\mathcal{C}$.
\end{thm}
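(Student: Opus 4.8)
The plan is to build an explicit bialgebra map $\psi\colon G_\mathcal{C}\to G_\mathcal{JR}$ that ``resolves'' red edges according to the local rule \emph{red} $=$ \emph{black} $-$ \emph{absent}, and then to identify $I_\mathcal{C}$ with its kernel. For a colored framed graph $\Gamma$ with set of red edges $R$, put
$$
\psi(\Gamma)=\sum_{S\subseteq R}(-1)^{|R|-|S|}\,\Gamma_S ,
$$
where $\Gamma_S$ is the all-black (hence framed) graph obtained from $\Gamma$ by recoloring black the red edges in $S$ and deleting the red edges in $R\setminus S$; extend $\psi$ to $G_\mathcal{C}$ by linearity. Recall that $\phi$ is the composite of $\iota$ with the projection $\pi\colon G_\mathcal{C}\to G_\mathcal{C}/I_\mathcal{C}$, and note that $\psi\circ\iota=\mathrm{id}_{G_\mathcal{JR}}$, since an all-black graph has $R=\emptyset$.

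First I would check that $\psi$ is a morphism of bialgebras. Compatibility with products and units is immediate: both products are disjoint union and the red-edge set of a disjoint union is the disjoint union of the red-edge sets, so the defining sums factor. Compatibility with counits holds because every $\Gamma_S$ has the same vertex set as $\Gamma$. The only substantial point is $(\psi\otimes\psi)\circ\Delta_\mathcal{C}=\Delta_\mathcal{JR}\circ\psi$. Expanding both sides, a summand on the left is indexed by an admissible vertex split $(p,q)$ of $\Gamma$ (no red edge of $\Gamma$ between $p$ and $q$) together with subsets of the red edges lying entirely inside $p$ and entirely inside $q$; a summand on the right is indexed by $S\subseteq R$ together with an arbitrary vertex split of $\Gamma_S$. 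For a split $(p,q)$ crossed by some red edge $e$ of $\Gamma$, the involution that adjoins $e$ to $S$ or deletes it from $S$ pairs the right-hand summands with opposite signs and identical tensor factors, since a crossing edge contributes to neither induced subgraph whether it was recolored black or deleted; these summands cancel in pairs. The surviving right-hand summands are exactly those with $(p,q)$ admissible, and for such a split $R$ is the disjoint union of the red edges inside $p$ and those inside $q$, which makes the two sides agree summand by summand, signs included.

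Next I would show $\psi(I_\mathcal{C})=0$, so that $\psi$ descends to a bialgebra map $\bar\psi\colon G_\mathcal{C}/I_\mathcal{C}\to G_\mathcal{JR}$. Since $I_\mathcal{C}$ is the ideal generated by the displayed three-term elements and $\psi$ is multiplicative, it suffices to evaluate $\psi$ on a single such generator: the three graphs there differ only in one edge $AB$, which is red, black, or absent, while all remaining data — in particular the other red edges — coincide; resolving those other red edges uniformly and using $\psi(\text{red }AB)=\psi(\text{black }AB)-\psi(\text{absent }AB)$ gives $0$. Conversely, an induction on the number of red edges shows that every colored framed graph is congruent modulo $I_\mathcal{C}$ to a linear combination of all-black graphs: applying the defining relation of $I_\mathcal{C}$ to a chosen red edge rewrites the graph as (the same graph with that edge black) minus (the same graph with that edge absent), each having one fewer red edge. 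Hence if $\alpha\in\ker\psi$, write $\alpha\equiv\iota(\gamma)\pmod{I_\mathcal{C}}$; then $0=\psi(\alpha)=\psi(\iota(\gamma))=\gamma$, so $\alpha\in I_\mathcal{C}$. Thus $\ker\psi=I_\mathcal{C}$, which in particular makes $I_\mathcal{C}$ a two-sided ideal and a coideal, so the bialgebra structure induced on $G_\mathcal{C}/I_\mathcal{C}$ is legitimate; $\bar\psi$ is then a bialgebra isomorphism onto $\mathrm{im}\,\psi=G_\mathcal{JR}$ (surjectivity from $\psi\circ\iota=\mathrm{id}$), and since $\bar\psi\circ\phi=\bar\psi\circ\pi\circ\iota=\psi\circ\iota=\mathrm{id}_{G_\mathcal{JR}}$, its inverse is precisely $\phi$; hence $\phi$ is a bialgebra isomorphism.

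The step I expect to require the most care is the coproduct compatibility of $\psi$ — specifically, organizing the cancellation of the vertex splits crossed by a red edge on the $\Delta_\mathcal{JR}\circ\psi$ side and matching the signs on the remaining ones; everything else is a routine verification of the bialgebra axioms together with a short induction.
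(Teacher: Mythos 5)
Your proposal is correct and follows essentially the same route as the paper: it constructs the same resolution map $\psi$ (the alternating sum over recoloring/deleting red edges), checks $\psi(I_\mathcal{C})=0$, uses the reduction of every class modulo $I_\mathcal{C}$ to all-black graphs, and inverts $\iota$. The only difference is one of bookkeeping: you verify the coproduct compatibility of $\psi$ explicitly (via the sign-reversing involution on splits crossed by a red edge, which needs a canonically chosen crossing edge but is otherwise fine) and obtain the biideal property of $I_\mathcal{C}$ as $\ker\psi$, whereas the paper asserts these checks and the coideal inclusion without detail.
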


\begin{proof}

We define the map $G_\mathcal{C}$ to $G_\mathcal{JR}$ on the generators as follows. Let $k$ be the set of the red edges of graph $\Gamma$. To every red edge of the graph,, we assign one of two states: state $b$ corresponds to changing the color of the corresponding edge to black, and state $r$ corresponds to removing the edge. For a collection of states $p \in \{b;r\}^k$, 
denote by $\Gamma_p$ the graph obtained from $\Gamma$ by removing the edges for which the value of $p$ is $r$ and painting the rest of the edges black. 

Interpret this graph as an element of $G_\mathcal{JR}$. Define
$$ \psi(\Gamma) = \sum_{p \in \{b;r\}^k} (-1)^{p_r} \Gamma_p, $$
where $p_r$ means the number of edges with state $r$ in $p$.

The map $\psi$ evaluated on any element of $I_\mathcal{C}$ is 0. Moreover, its restriction on the image of $\iota$ is a two-sided inverse to $\iota$. As any element of $G_\mathcal {C}$ modulo $I_\mathcal{C}$ is equivalent to a linear combination of graphs with all the edges colored black, the isomorphism on the level of algebras follows.

The ideal $I_\mathcal{C}$ satisfies
$$ \Delta_\mathcal{C} I_\mathcal{C} \subset I_\mathcal{C} \otimes G_\mathcal{C} + G_\mathcal{C}\otimes I_\mathcal{C},$$
which implies that the bialgebra structure on the quotient $G_\mathcal{C}/I_\mathcal {C}$ is well-defined. The fact that the map $\psi$ is a coalgebra morphism is a routine check we omit.
\end{proof}

According to the Milnor-Moore theorem~\cite{MM} any commutative cocommutative graded connected bialgebra $A$ of finite type  over $\mathbb K$ with the coproduct operation $\Delta$ is isomorphic to the symmetric algebra of its primitive subspace, that is, the graded subspace of $A$ formed by the elements $p\in A$ such that 
$$\Delta (p) = p\otimes 1 + 1 \otimes p.$$ Given a bialgebra $A$, we will denote its primitive subspace by $PA$.

In particular, it follows that the restrictions of the maps $\phi$ and $\psi$ to the corresponding primitive subspaces are graded linear isomorphisms. It turns out that the primitive subspace of $G_\mathcal{C}/I_\mathcal{C}$ admits a simple description.

\begin{prop}
The primitive subspace of $G_\mathcal{C}/I_\mathcal{C}$ is generated by the connected framed graphs with all the edges colored red.
\end{prop}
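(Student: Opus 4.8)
The plan is to realise the span of all-red graphs as an explicit polynomial sub-bialgebra $G^{\mathrm{red}}\subseteq G_\mathcal{C}$ that projects isomorphically onto $G_\mathcal{C}/I_\mathcal{C}$, and then to compute the primitive subspace inside $G^{\mathrm{red}}$, where it is transparent.

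Let $G^{\mathrm{red}}\subseteq G_\mathcal{C}$ be the subspace spanned by those colored framed graphs all of whose edges are red, and let $\pi\colon G_\mathcal{C}\to G_\mathcal{C}/I_\mathcal{C}$ denote the quotient map. First I would check that $G^{\mathrm{red}}$ is a graded sub-bialgebra: it is obviously closed under disjoint union, and a vertex splitting admissible for $\Delta_\mathcal{C}$ must not cut a red edge, so for an all-red graph it is forced to be a union of connected components; hence $\Delta_\mathcal{C}(G^{\mathrm{red}})\subseteq G^{\mathrm{red}}\otimes G^{\mathrm{red}}$. As a graded algebra, $G^{\mathrm{red}}$ is the free polynomial algebra on the set of \emph{connected} all-red framed graphs, since every all-red graph is the disjoint union of its connected components in a unique way. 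A connected graph cannot be split without cutting an edge, so $\Delta_\mathcal{C}$ is trivial on each connected all-red graph, i.e.\ these polynomial generators are primitive; by the structure theorem for graded connected commutative cocommutative bialgebras over a characteristic $0$ field (equivalently, $P(\mathrm{Sym}\,V)=V$) it follows that $PG^{\mathrm{red}}$ is exactly the span of the connected all-red framed graphs.

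Next I would show that $\pi$ restricts to a bialgebra isomorphism $G^{\mathrm{red}}\xrightarrow{\sim}G_\mathcal{C}/I_\mathcal{C}$. As the restriction of $\pi$ to a sub-bialgebra it is automatically a morphism of bialgebras, so only bijectivity must be checked, and for this I would use the map $\psi$ constructed in the proof of the theorem above. Evaluated on an all-red graph $\Gamma$ with edge set $E$, its defining formula reads
$$\psi(\Gamma)=\sum_{B\subseteq E}(-1)^{|E|-|B|}\,\Gamma_B,$$
where $\Gamma_B$ is the Joni--Rota graph on the same vertex set (and with the same framing) whose black edge set is $B$. Matching all-red graphs with Joni--Rota graphs through their edge sets, the matrix of $\psi|_{G^{\mathrm{red}}}$ in each degree is unitriangular for the partial order given by inclusion of edge sets (the term $B=E$ has coefficient $1$, all others have strictly smaller edge set), hence invertible; so $\psi$ restricts to a linear isomorphism $G^{\mathrm{red}}_n\xrightarrow{\sim}(G_\mathcal{JR})_n$ for all $n$. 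On the other hand $\psi$ vanishes on $I_\mathcal{C}$ and $\psi\circ\iota=\mathrm{id}$, so $\psi$ is onto with $I_\mathcal{C}\subseteq\ker\psi$; comparing dimensions degree by degree through the isomorphism $G_\mathcal{JR}\cong G_\mathcal{C}/I_\mathcal{C}$ forces $\ker\psi=I_\mathcal{C}$. Then $G^{\mathrm{red}}\cap I_\mathcal{C}=\ker(\psi|_{G^{\mathrm{red}}})=0$ gives injectivity of $\pi|_{G^{\mathrm{red}}}$, while $\psi(G^{\mathrm{red}})=G_\mathcal{JR}=\psi(G_\mathcal{C})$ gives $G_\mathcal{C}=G^{\mathrm{red}}+\ker\psi=G^{\mathrm{red}}+I_\mathcal{C}$, i.e.\ surjectivity.

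Finally, a bialgebra isomorphism carries primitives onto primitives, so $P(G_\mathcal{C}/I_\mathcal{C})=\pi\bigl(PG^{\mathrm{red}}\bigr)$, which by the first step is the span of the images of the connected all-red framed graphs. I expect the only genuinely non-formal ingredient to be the unitriangularity of $\psi$ on $G^{\mathrm{red}}$ (a M\"obius inversion over the Boolean lattice of edge subsets), together with the small bookkeeping identifying $\ker\psi$ with $I_\mathcal{C}$; everything else is either the classical Milnor--Moore-type structure theory or the already-established isomorphism $G_\mathcal{JR}\cong G_\mathcal{C}/I_\mathcal{C}$. As an alternative to the dimension count, surjectivity of $\pi|_{G^{\mathrm{red}}}$ can also be obtained directly from the defining relations of $I_\mathcal{C}$, which let one trade any black edge for the same graph with that edge recoloured red plus the same graph with that edge deleted; iterating this reduces every graph modulo $I_\mathcal{C}$ to a combination of all-red graphs.
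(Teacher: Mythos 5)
Your argument is correct, and it shares the two essential observations with the paper's proof: connected all-red graphs are primitive because $\Delta_\mathcal{C}$ cannot separate vertices joined by red edges, and the relations in $I_\mathcal{C}$ let one trade a black edge for a red edge plus a deletion, so all-red graphs span the quotient (your closing remark is exactly the paper's first sentence). Where you diverge is in the overall organization: the paper simply notes that the classes of all-red connected graphs are primitive generators of the algebra $G_\mathcal{C}/I_\mathcal{C}$ and invokes the Milnor--Moore structure theory stated just beforehand to conclude that they span the whole primitive subspace, whereas you build the span $G^{\mathrm{red}}$ of all-red graphs into an explicit sub-bialgebra, identify it with $\mathrm{Sym}$ of the connected all-red graphs, and prove that the quotient map restricts to a bialgebra isomorphism $G^{\mathrm{red}}\xrightarrow{\sim}G_\mathcal{C}/I_\mathcal{C}$, using the unitriangularity of $\psi$ on all-red graphs (with respect to edge count) plus a degreewise dimension count to get $\ker\psi=I_\mathcal{C}$ and hence $G_\mathcal{C}=G^{\mathrm{red}}\oplus I_\mathcal{C}$. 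This costs more bookkeeping than the paper's three-line argument, but it buys a genuinely stronger statement that the paper does not prove and does not need: the all-red graphs are linearly independent modulo $I_\mathcal{C}$ (so they form a basis of the quotient, and the connected ones give a basis, not merely a spanning set, of the primitive subspace), and $G^{\mathrm{red}}$ is a canonical bialgebra section of the projection. The only blemishes are cosmetic: ``inclusion of edge sets'' should really be ``number of edges'' since distinct isomorphism classes are not literally nested, and the identification $\ker\psi=I_\mathcal{C}$ silently uses that each graded piece is finite-dimensional, which holds but deserves a word.
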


\begin{proof}
Using the relations, we can represent any element of $G_\mathcal{C}/I_\mathcal{C}$ as a linear combination of classes of the graphs with all the edges colored red. On the other hand, the class of any connected framed graph with all the edges colored red is a primitive element. Also, the relations allow us to represent any element of the factor as a disjoint union of connected framed graphs colored red. The assertion follows.
\end{proof}

The isomorphism between $G_\mathcal{JR}$ and $G_\mathcal {C}/I_\mathcal{C}$ implies the following formula for the projection operator from $G_\mathcal{JR}$ to the subspace of its primitive elements.

\begin{prop}
Let $\pi_\mathcal{JR}$ is a linear endomorphism of $G_\mathcal{JR}$ defined on the generators as
$$ \pi_\mathcal{JR} (\Gamma) = \sum_{\Gamma'' \subset \Gamma' \subset \Gamma} (-1)^{e(\Gamma') - e(\Gamma'')} \Gamma''$$
where the summation goes along all possible pairs $\Gamma'' \subset \Gamma'$ of subgraphs of $\Gamma$, such that $\Gamma'$ is connected, and $\Gamma''$ a spanning subgraph of $\Gamma$.  

The map $\pi_{\mathcal{JR}}$ is a projection on the primitive subspace along the subspace of decomposable elements.
\end{prop}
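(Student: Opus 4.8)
The plan is to recognize $\pi_{\mathcal{JR}}$ as the conjugate, by the isomorphism $\phi\colon G_{\mathcal{JR}}\to G_{\mathcal{C}}/I_{\mathcal{C}}$ of the previous theorem, of the canonical projection onto primitives. By the Milnor--Moore theorem, $G_{\mathcal{C}}/I_{\mathcal{C}}$ is the symmetric algebra on its primitive subspace $P$, so it carries a distinguished linear endomorphism $\mathrm{pr}_P$ that acts as the identity on $P$ and kills both $\mathbb{K}\cdot 1$ and every product of two or more elements of positive degree; in other words $\mathrm{pr}_P$ is the projection onto the primitive subspace along the decomposables. (It is worth writing out here the elementary fact that the primitive subspace of $\mathrm{Sym}(P)$ is exactly $P$ and that $\bigoplus_{n\ge 2}\mathrm{Sym}^n(P)$ is exactly the span of the decomposable elements, so that the degree-one projection genuinely deserves the name ``projection onto primitives along decomposables''.) Since $\phi$ and its inverse $\psi$ (the map defined in the proof of the theorem, which descends to $G_{\mathcal{C}}/I_{\mathcal{C}}$) are graded bialgebra isomorphisms, they carry primitives to primitives and decomposables to decomposables, so $\psi\circ\mathrm{pr}_P\circ\phi$ is automatically a projection of $G_{\mathcal{JR}}$ onto its primitive subspace along its decomposables. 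It thus only remains to check that this composite is given by the stated inclusion--exclusion formula.

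First I would rewrite $\phi(\Gamma)=\iota(\Gamma)$ in terms of all-red graphs. Read one edge at a time, the defining relation of $I_{\mathcal{C}}$ says that, with the rest of the graph fixed, a black edge equals a red edge plus the edgeless configuration. Applying this to every edge of $\Gamma$ and expanding, one obtains in $G_{\mathcal{C}}/I_{\mathcal{C}}$ the identity $\iota(\Gamma)=\sum_{\Gamma'}\Gamma'_r$, the sum running over all spanning subgraphs $\Gamma'\subseteq\Gamma$, where $\Gamma'_r$ denotes $\Gamma'$ with every edge recolored red. By the Proposition, $\Gamma'_r$ equals the product of its all-red connected components, each of which is primitive; hence $\mathrm{pr}_P$ annihilates the summand $\Gamma'_r$ unless $\Gamma'$ is connected on the full vertex set of $\Gamma$, in which case it fixes it. Therefore $\mathrm{pr}_P\bigl(\phi(\Gamma)\bigr)=\sum_{\Gamma'}\Gamma'_r$, the sum now over the connected spanning subgraphs $\Gamma'\subseteq\Gamma$.

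Next I would apply $\psi$ to each surviving summand. Since every edge of $\Gamma'$ is red, the index set $k$ in the definition of $\psi$ is $E(\Gamma')$, and a choice of states is the same as a choice of spanning subgraph $\Gamma''\subseteq\Gamma'$ (the edges put in state $b$), contributing the sign $(-1)^{e(\Gamma')-e(\Gamma'')}$, so $\psi(\Gamma'_r)=\sum_{\Gamma''\subseteq\Gamma'}(-1)^{e(\Gamma')-e(\Gamma'')}\,\Gamma''$. Assembling the two steps and reorganizing the result as a sum over pairs gives
$$\psi\circ\mathrm{pr}_P\circ\phi\,(\Gamma)=\sum_{\Gamma''\subseteq\Gamma'\subseteq\Gamma}(-1)^{e(\Gamma')-e(\Gamma'')}\,\Gamma'',$$
the sum being over the pairs with $\Gamma'$ a connected spanning subgraph of $\Gamma$ and $\Gamma''$ a spanning subgraph of $\Gamma$ (the latter condition being automatic once $\Gamma''\subseteq\Gamma'$). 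This is exactly the formula defining $\pi_{\mathcal{JR}}$, which together with the first paragraph completes the argument.

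The computation is elementary; the care is entirely in the bookkeeping. The vertex set must be kept fixed throughout, so that the isolated vertices of an intermediate subgraph $\Gamma'$ are correctly counted as one-vertex primitive factors, and the small cases should be matched against the formula --- the one-vertex graph is fixed by $\pi_{\mathcal{JR}}$ since it is primitive, and the empty graph is sent to $0$ since it has no connected spanning subgraph. The only point beyond bookkeeping is the elementary symmetric-algebra lemma mentioned in the first paragraph. A conceivable alternative is to verify directly from the inclusion--exclusion formula that $\pi_{\mathcal{JR}}(\Gamma)$ is always primitive, that $\pi_{\mathcal{JR}}$ restricts to the identity on primitives, and that it annihilates disjoint unions; but expanding $\Delta_{\mathcal{JR}}\bigl(\pi_{\mathcal{JR}}\Gamma\bigr)$ by hand is considerably more painful than the two-step computation above, so I would not take that route.
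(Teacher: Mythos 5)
Your proof is correct and follows essentially the same route as the paper: the paper's own argument is precisely the composition $\psi\circ\pi_{\mathcal C}\circ(\text{all-red expansion})\circ\phi$, which is your $\psi\circ\mathrm{pr}_P\circ\phi$. You simply spell out the details (the identification of $\pi_{\mathcal C}$ with the projection onto primitives along decomposables, and the two inclusion--exclusion expansions) that the paper leaves implicit.
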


\begin{proof}
The map $\pi_\mathcal{JR}$ is the composition of the following operations: the isomorphism $\phi$, the realization of the resulting element as a linear combination of the framed graphs with all the edges colored red, the projection $\pi_{\mathcal{C}}$, which is defined on the graphs with all the edges colored red as
$$\pi_{\mathcal{C}}(\Gamma) = \begin{cases} \Gamma,\quad\mbox{$\Gamma$ is connected}\\0,\quad\mbox{otherwise}\end{cases}$$
and the isomorphism $\psi$. 
\end{proof}

This formula is an alternative version of the projection formula~\cite{LandoHopf, Schmitt}. The reader is invited to compare this statement with the Remark to section 2.2 of~\cite{LandoKazaryan} ---  it describes, implicitly, the result of applying $\psi$ to a primitive element of $G_\mathcal{C}/I_\mathcal{C}$.

We would like to remark that the algebra $G_\mathcal{C}/I_\mathcal{C}$ admits one more projection to the subspace of primitives: a graph with all the edges colored red is mapped to the join of its connected components. It would be interesting to get an explicit description of its kernel.

Recall, that the \emph{Lando framed graph bialgebra $\mathcal L$} (or just the \emph{Lando bialgebra}, \cite{Lando, LandoHopf}) is defined as the quotient of $G_{\mathcal{JR}}$ by the biideal  $\mathcal F_{\mathcal {JR}}$ generated by \emph{4-elements}, usually written in the form.
$$\Gamma -\Gamma'_{uv} - (-1)^{f(v)}(\tilde \Gamma_{uv} - \tilde \Gamma'_{uv}).$$

This formula has the following meaning. Let $\Gamma$ be a graph, and let $u, v$ be two of its vertices joined by an edge. Then $\Gamma'_{uv}$ denotes the graph obtained from $\Gamma$ by erasing the edge between $u$ and $v$. The graph $\tilde \Gamma_{uv}$ is obtained from $\Gamma$ by the following operation: for every vertex $w$ different from $u,v$ and connected by an edge with $v$, the vertices $u$ and $w$ are joined by an edge in $\tilde \Gamma_{uv}$ if and only if the vertices $u$ and $w$ are not joined in $\Gamma$. The adjacencies of all other possible pairs of vertices in $\Gamma$ and $\tilde \Gamma_{uv}$ are the same. The graph $\tilde \Gamma'_{uv}$ is obtained from $\tilde \Gamma_{uv}$ by erasing the edge between $u$ and $v$. Finally, the framing of the vertex $u$ in both $\tilde \Gamma_{uv}$ and $\tilde \Gamma'_{uv}$ is set to $f(u) + f(v)$. 

Using the isomorphism between $G_\mathcal{JR}$ and $G_\mathcal{C}/I_\mathcal{C}$ any 4-element can be written as a class of the following element of $G_\mathcal{C}$:
\begin{gather} \label{fig:4el}
\begin{tikzpicture}[baseline=13pt,scale = 1]
		\node at (0,2) (edgesx) {$x$};
        \node at ( 0,1) (vertexu) [draw] {$A$};
        \node at (0,0) (vertexv) [draw] {$B$};
        \node at (0,-1) (edgesy) {$y$};        
        \draw (edgesx) -- (vertexu);
      	\draw (vertexu) -- (vertexv) [color = red];
      	\draw (vertexv) -- (edgesy);
\end{tikzpicture} -\, (-1)^B
\begin{tikzpicture}[baseline=13pt,scale = 1]
		\node at (0,2) (edgesx) {$x\vartriangle y$};
        \node at ( 0,1) (vertexu) [draw] {$A+B$};
        \node at (0,0) (vertexv) [draw] {$B$};
        \node at (0,-1) (edgesy)  {$y$};   
        \draw (edgesx) -- (vertexu);
      	\draw (vertexu) -- (vertexv) [color = red];
      	\draw (vertexv) -- (edgesy);
\end{tikzpicture}
\end{gather}
Here, the vertex $u$ has framing $A$, the vertex $v$ has framing $B$, $x$ and $y$ are subsets of vertices of the graph that are joined by edges to the corresponding vertices ($x\cap y$ can be non-empty), and $x\vartriangle y$ denotes the symmetric difference of the corresponding sets of vertices. The difference between the left-hand side and the right-hand side of the expression is the following: in the graph on the right the edges of the graph that connected elements of $x\cap y$ to the vertex of framing $A$ are erased, but the edges between the vertices forming $y\setminus x$ and the vertex of framing $A$ are added. Moreover, the framing $A$ is changed to $A+B$ (recall, that the framing takes values in $\mathbb F_2$, so we can add the values). In the following, any depicted element of $G_\mathcal{C}$ means its class modulo $I_\mathcal{C}$.

The following proposition describes the image  $\mathcal F_{\mathcal {JR}}$ under the isomorphism $\phi$.

\begin{prop}\label{prop:rels}
$\mathcal F_{\mathcal {C}} = \phi(\mathcal F_{\mathcal {JR}}) $ is the graded biideal of $G_\mathcal{C}/I_\mathcal {C}$ generated by the elements:

\begin{gather}\label{fig:4elred}
\sum_{b = b_1 \cup b_2}
\begin{tikzpicture}[baseline=23pt,scale = 1]
		\node at (0,2) (edgesx) {$a$};
        \node at ( 0,1) (vertexu) [draw] {$A$};
        \node at (0,0) (vertexv) [draw] {$B$};
        \node at (-1,0) (edgesy) {$c$};
        \node at (1, 1) (edgesz1) {$b_1$};
        \node at (1,0) (edgesz2) {$b_2$};
        \draw (edgesx) -- (vertexu) [color = red];
      	\draw (vertexu) -- (vertexv) [color = red];
      	\draw (vertexv) -- (edgesy) [color = red];
      	\draw (vertexu) -- (edgesz1) [color = red];
      	\draw (vertexv) -- (edgesz2) [color = red];
\end{tikzpicture}
-\, (-1)^{B} \sum_{c = c_1 \cup c_2} 
\begin{tikzpicture}[baseline=23pt,scale = 1]
		\node at (0,2) (edgesx) {$a$};
        \node at ( 0,1) (vertexu) [draw] {$A+B$};
        \node at (0,0) (vertexv) [draw] {$B$};
        \node at (-3/2, 0) (edgesy2) {$c_1$};
        \node at (-3/2, 1) (edgesy3) {$c_2$};
        \node at (3/2, 0) (edgesz) {$b$};
        \draw (edgesx) -- (vertexu) [color = red];
      	\draw (vertexu) -- (vertexv) [color = red];
      	\draw (vertexv) -- (edgesy2) [color = red];
      	\draw (vertexu) -- (edgesy3) [color = red];
      	\draw (vertexv) -- (edgesz) [color = red];
\end{tikzpicture}
,
\end{gather}
where the subsets $a,b,c$ of vertices of the unshown part of the graph are pairwise disjoint.
\end{prop}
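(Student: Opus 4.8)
By the preceding theorem, $\mathcal{F}_\mathcal{C}=\phi(\mathcal{F}_\mathcal{JR})$ is the image under the bialgebra isomorphism $\phi$ of the biideal generated by the $4$-elements, hence is the graded biideal of $G_\mathcal{C}/I_\mathcal{C}$ generated by the classes of the elements \eqref{fig:4el}, which, as recorded just above, are exactly the $\phi$-images of the $4$-elements. So what has to be shown is that the biideal generated by the family \eqref{fig:4el} coincides with the biideal generated by the family \eqref{fig:4elred}; and since the biideal generated by a set of elements depends only on its $\mathbb{K}$-linear span, it is enough to prove that these two families have the same span in $G_\mathcal{C}/I_\mathcal{C}$ (the first family taken over all admissible subsets $x,y$, all framings, and all ways of filling in the unshown part of the picture; the second over all pairwise disjoint $a,b,c$, all framings, and all fillings). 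Gradedness is then automatic, all the generators being homogeneous.

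To compare the spans I would pass to the all-red spanning set of $G_\mathcal{C}/I_\mathcal{C}$, using the $I_\mathcal{C}$-relation to trade each black edge for a red edge and a deletion; this is precisely the mechanism behind the isomorphism $G_\mathcal{JR}\cong G_\mathcal{C}/I_\mathcal{C}$, so every class in $G_\mathcal{C}/I_\mathcal{C}$ is a combination of classes of graphs with all edges red. Applying this edge by edge to the two black bundles joining $u$ to $x$ and $v$ to $y$ in \eqref{fig:4el} expands that element into a signed sum of all-red graphs indexed by which of those edges are kept red and which are deleted. Setting $a=x\setminus y$, $b=x\cap y$, $c=y\setminus x$ and collecting the terms, while using the cancellations between the expansions of the two summands of \eqref{fig:4el}, one should arrive at an identity of the following shape in $G_\mathcal{C}/I_\mathcal{C}$: the class of \eqref{fig:4el} with $x=a\cup b$ and $y=b\cup c$ equals the class of \eqref{fig:4elred} for the triple $(a,b,c)$, plus a $\mathbb{Z}$-linear combination of classes of \eqref{fig:4elred} for triples $(a',b',c')$ with $|a'\cup b'\cup c'|<|a\cup b\cup c|$. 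The correction terms come precisely from the summands in which some vertex of $a\cup b\cup c$ ends up joined to neither $u$ nor $v$ and so drops out of the neighbourhood data. Granting this identity, one inclusion --- that every \eqref{fig:4el} lies in the span of the \eqref{fig:4elred}'s --- is immediate, and the reverse inclusion follows by induction on $|a\cup b\cup c|$, the base case $a=b=c=\varnothing$ being clear because there both \eqref{fig:4el} and \eqref{fig:4elred} reduce to the same element supported on $\{u,v\}$.

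The one step that requires genuine work, and the main obstacle, is establishing the displayed identity, that is, verifying that whenever not all of one of the two black bundles survives as red edges the leftover all-red graphs reassemble exactly into lower-order copies of \eqref{fig:4elred}. Morally, this is what dictates the particular form of \eqref{fig:4elred}: the two inner summations $\sum_{b=b_1\cup b_2}$ and $\sum_{c=c_1\cup c_2}$, together with the pairwise disjointness of $a,b,c$, are exactly what makes every all-red graph occurring in \eqref{fig:4elred} connected --- so that, up to multiplication by a disjoint graph, \eqref{fig:4elred} is a primitive element --- and what allows the reorganization, and hence the induction, to close. Carrying this out amounts to a finite inclusion--exclusion computation with all-red graphs, and I do not anticipate any difficulty there beyond the bookkeeping.
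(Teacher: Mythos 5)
Your overall strategy is viable and close in spirit to the paper's, but as written the argument has a genuine gap exactly where the content of the proposition lies: the ``displayed identity'' --- that the all-red expansion of a 4-element \eqref{fig:4el} reassembles into the generator \eqref{fig:4elred} for $(a,b,c)$ plus lower-order generators --- is only asserted (``one should arrive at'', ``granting this identity''), and you yourself flag it as the main obstacle and defer it to unspecified bookkeeping. Everything else in your write-up (a biideal depends only on the linear span of its generators, gradedness, the triangular induction on $|a\cup b\cup c|$) is routine; the proposition \emph{is} this reassembly statement, so without it the proof is a plan rather than a proof.

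For the record, the identity you want is true and closes in a few lines, and proving it essentially reproduces the paper's argument read in the opposite direction. Modulo $I_\mathcal{C}$, a black edge equals the same edge colored red plus the graph with that edge deleted. Writing $a=x\setminus y$, $b=x\cap y$, $c=y\setminus x$ and expanding every black edge at $u$ and $v$ in both terms of \eqref{fig:4el}, a vertex of $b$ in the first term survives as red to $u$ only, to $v$ only, to both, or drops out entirely, and likewise for a vertex of $c$ in the second term; grouping the surviving vertices yields
$$ E(x,y)\;=\;\sum_{a'\subseteq a}\ \sum_{b'\subseteq b}\ \sum_{c'\subseteq c} R(a',b',c'), $$
where $E(x,y)$ denotes the class of \eqref{fig:4el}, $R(a',b',c')$ the element \eqref{fig:4elred} for the triple $(a',b',c')$, and the discarded vertices are absorbed into the unshown part. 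Two points of bookkeeping matter here: all coefficients are $+1$ (so your induction can be replaced by straightforward M\"obius inversion), and the inner sums in \eqref{fig:4elred} must be read as sums over ordered pairs with $b_1\cup b_2=b$ (respectively $c_1\cup c_2=c$) with overlaps allowed --- this is precisely what the three surviving states of a vertex produce, and getting it wrong would break the identity. The paper performs the same inclusion--exclusion in the inverse direction, expressing each \eqref{fig:4elred} as a signed sum over $b'\subseteq b$, $c'\subseteq c$ of (combinations of) 4-elements and declaring the converse obvious; once you actually carry out your expansion, the two proofs coincide up to this reorganization.
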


\begin{proof}

Take a 4-element of the form shown in equation~\ref{fig:4el}. Denote $a = x - y$, $b = x\cap y$, $c = y-x$.

Modulo $I_\mathcal{C}$, every graph with a black edge can be presented as a sum of the same graph with the corresponding edge colored red and the same graph with the corresponding edge removed. The inclusion-exclusion principle implies that the element from the hypothesis is the following combination of the 4-elements
$$\sum_{\begin{smallmatrix}b' \subset b\\ c' \subset c\end{smallmatrix}} (-1)^{|b-b'| + |c - c'|}
\left(
\begin{tikzpicture}[baseline=-15pt,scale = 1]
		\node at (0,1) (edgesx) {$a$};
        \node at ( 0,0) (vertexu) [draw] {$A$};
        \node at (0,-1) (vertexv) [draw] {$B$};
        \node at (-1,-1) (edgesy) {$c'$};
        \node at (1, -1/2) (edgesz1) {$b'$};
        \draw (edgesx) -- (vertexu) [color = red];
      	\draw (vertexu) -- (vertexv) [color = red];
      	\draw (vertexv) -- (edgesy) ;
      	\draw (vertexu) -- (edgesz1) ;
      	\draw (vertexv) -- (edgesz1) ;
\end{tikzpicture}
-\, (-1)^{B} 
\begin{tikzpicture}[baseline=-15pt,scale =1]
		\node at (0,1) (edgesx) {$a$};
        \node at ( 0,0) (vertexu) [draw] {$A+B$};
        \node at (0,-1) (vertexv) [draw] {$B$};
        \node at (-3/2, -1/2) (edgesy2) {$c'$};
        \node at (3/2, -1) (edgesz) {$b'$};
        \draw (edgesx) -- (vertexu) [color = red];
      	\draw (vertexu) -- (vertexv) [color = red];
      	\draw (vertexv) -- (edgesy2) ;
      	\draw (vertexu) -- (edgesy2) ;
      	\draw (vertexv) -- (edgesz);
\end{tikzpicture}
\right).$$
The fact that every 4-element can be realized as a linear combination of the elements of the type under discussion is obvious.
\end{proof}

Notice that as the constructed elements form a biideal of $G_\mathcal{C}/I_\mathcal{C}$, the primitive subspace of $\mathcal N = (G_\mathcal{C}/I_\mathcal{C})/\mathcal F_{\mathcal {C}}$ admits a realization as the quotient of $P(G_\mathcal{C}/I_\mathcal{C})$ by its intersection with $\mathcal F_{\mathcal {C}}$. Clearly, $\mathcal F_\mathcal{C}$ is graded by the number of connected components, so the intersection consists precisely of all elements of $\mathcal F_{\mathcal C}$ that admit a realization as a linear combination of connected graphs. It provides an explicit description of the primitive subspace of the Lando bialgebra in terms of generators and relations.

This construction allows us to answer in part exercise 8 of Chapter 14 of~\cite{CDbook}. Namely, we claim that the space $\mathcal L$ in its realization as a quotient $\mathcal N = (G_\mathcal{C}/I_\mathcal{C})/\mathcal F_{\mathcal {C}}$ can be treated as an analogue of the algebra $\mathcal C$.  Indeed:
\begin{itemize}
\item there is a natural inclusion of $\mathcal L$ to this space which is in fact isomorphism;
\item the generators of the primitive subspace are connected graphs.
\end{itemize}
The elements of $I_\mathcal{C}$ can be naturally treated as analogues of the STU-relations.

It was shown in \cite{KarevHopf} that the bialgebra $\mathcal L$ carries the structure of a Hopf module over its subbialgebra $B\mathcal L
$ generated by the graphs with the framing identically equal to 0. Namely, the Larson-Sweedler theorem~\cite{LS} implies that $\mathcal L$ is a free $B\mathcal L$-module generated by the subbialgebra of the graphs with the framing identically equal to 1. Denote the latter subbialgebra by $W\mathcal L$.

\begin{thm}
The primitive subspace $P\mathcal L$  admits a direct sum decomposition
$$P\mathcal L = PB\mathcal L\oplus PW\mathcal L, $$
where the subspace $ PB\mathcal L$ consists of a linear combination of graphs whose framing is identically equal to 0, and $PW\mathcal L$ consists of a linear combination of graphs whose framing is identically equal to 1.
\end{thm}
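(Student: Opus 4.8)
The plan is to derive the decomposition from the Hopf-module structure recalled just above the statement: I would promote the module isomorphism $\mathcal{L}\cong B\mathcal{L}\otimes W\mathcal{L}$ to an isomorphism of bialgebras and then read off the primitive subspace of a tensor product of connected bialgebras.

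Concretely, let $\mu\colon B\mathcal{L}\otimes W\mathcal{L}\to\mathcal{L}$ be the multiplication map $\mu(b\otimes w)=b\cdot w$. By the Larson--Sweedler result cited above, $\mathcal{L}$ is free as a $B\mathcal{L}$-module on $W\mathcal{L}$, which is precisely the assertion that $\mu$ is a linear isomorphism; this is the one substantive input, and I would simply invoke it. The first step would then be to check that $\mu$ is in fact an isomorphism of graded bialgebras. It is an algebra homomorphism because $\mathcal{L}$ is commutative, so $(bw)(b'w')=(bb')(ww')$. It is a coalgebra homomorphism because $\Delta_{\mathcal{L}}$ is multiplicative and, crucially, $B\mathcal{L}$ and $W\mathcal{L}$ are \emph{sub}bialgebras of $\mathcal{L}$, not merely subalgebras: for $b\in B\mathcal{L}$, $w\in W\mathcal{L}$ one has $\Delta_{\mathcal{L}}(bw)=\Delta_{\mathcal{L}}(b)\,\Delta_{\mathcal{L}}(w)$, and since $\Delta_{B\mathcal{L}}$ and $\Delta_{W\mathcal{L}}$ are the restrictions of $\Delta_{\mathcal{L}}$ this is exactly $(\mu\otimes\mu)$ applied to the tensor-product coproduct of $b\otimes w$; compatibility with units and counits is immediate. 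Hence $\mu$ is a graded bialgebra isomorphism.

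The second step would be the computation of primitives. Both $B\mathcal{L}$ and $W\mathcal{L}$ are connected graded bialgebras of finite type over the characteristic-$0$ field $\mathbb{K}$, so by the Milnor--Moore theorem $B\mathcal{L}\otimes W\mathcal{L}\cong S(PB\mathcal{L})\otimes S(PW\mathcal{L})\cong S(PB\mathcal{L}\oplus PW\mathcal{L})$, whence
$$P(B\mathcal{L}\otimes W\mathcal{L})=(PB\mathcal{L}\otimes 1)\ \oplus\ (1\otimes PW\mathcal{L}).$$
Transporting this along $\mu$, and using once more that $B\mathcal{L}$ and $W\mathcal{L}$ are subbialgebras (so that $PB\mathcal{L}\subset P\mathcal{L}$ and $PW\mathcal{L}\subset P\mathcal{L}$ via the inclusions, while $\mu(PB\mathcal{L}\otimes 1)=PB\mathcal{L}$ and $\mu(1\otimes PW\mathcal{L})=PW\mathcal{L}$ as subspaces of $\mathcal{L}$), yields $P\mathcal{L}=PB\mathcal{L}\oplus PW\mathcal{L}$. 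Finally, $B\mathcal{L}$ (resp. $W\mathcal{L}$) is by definition spanned by the classes of graphs whose framing is identically $0$ (resp. identically $1$), which gives the stated description of the two summands.

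The only genuinely hard point is external to this argument: it is the freeness of $\mathcal{L}$ over $B\mathcal{L}$, which ultimately rests on producing a bialgebra projection $\mathcal{L}\to B\mathcal{L}$ compatible with the $4$-term relations — nontrivial, since naively ``setting all framings to $0$'' does not obviously respect those relations — and this is the content of the cited work, which I would not reprove. Granting it, everything else is formal, the one step deserving a line of justification being that $\mu$ preserves coproducts, for which it is essential that $B\mathcal{L}$ and $W\mathcal{L}$ are closed under $\Delta_{\mathcal{L}}$ and not just under disjoint union. One could instead attempt a direct argument inside $\mathcal{N}=(G_\mathcal{C}/I_\mathcal{C})/\mathcal{F}_\mathcal{C}$ using the explicit generators and relations of $P\mathcal{L}$, but proving that the framing-$0$ and framing-$1$ parts actually split off as a \emph{direct} sum looks considerably more delicate by hand, so I would keep to the Hopf-module route.
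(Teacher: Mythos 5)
Your proposal is correct and follows essentially the same route as the paper: the theorem is stated there as a direct consequence of the Hopf-module structure and the Larson--Sweedler freeness $\mathcal L \cong B\mathcal L\otimes W\mathcal L$ recalled from~\cite{KarevHopf} in the preceding paragraph, with no further argument given. Your write-up merely makes explicit the formal steps (multiplication is a bialgebra isomorphism, primitives of a tensor product of connected bialgebras split), which is exactly the deduction the paper leaves to the reader.
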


The structural results of~\cite{KarevHopf} imply that the subspace $PW\mathcal L$ is isomorphic to the subspace of $P\mathcal N$ generated by the connected graphs with red edges only, such that the framing of \emph{at least} one vertex is 1.

%
%

Using the developed framework, we can give an alternative definition of the framed chromatic polynomial. It is defined as the multiplicative function obeying the following relations :
\begin{center}
\begin{tikzpicture}
		\node at (0,2) (edgesx) {$x$};
        \node at ( 0,1) (vertexu) [draw] {$A$};
        \node at (0,0) (vertexv) [draw] {$B$};
        \node at (0,-1) (edgesy) {$y$};
        
        \draw (edgesx) -- (vertexu) [color = red];
      	\draw (vertexu) -- (vertexv) [color = red];
      	\draw (vertexv) -- (edgesy) [color = red];
\end{tikzpicture}
\raisebox{47pt}{$=(-1)^{A+B + |x\cap y|} $}
\begin{tikzpicture}
        \node at ( 0,1/2) (vertexu) [draw] {$A + B + AB$};
        \node at (0,-1) (edgesy) {$x\cup y$};
        
      	\draw (vertexu) -- (edgesy) [color = red];
\end{tikzpicture}
\raisebox{47pt}{.}
\end{center}

\section{Leaf attachment}

Let us discuss a simple corollary of the relations in $\mathcal N$.

\begin{thm}
The following identity holds:
$$
\begin{tikzpicture}[baseline=-15pt,scale = 1]
		\node at (-1,0) (edgesx) {$x$};
        \node at ( 0,0) (vertexu) [draw] {$A$};
        \node at (0,-1) (vertexv) [draw] {$B$};
        \node at (-1,-1) (edgesy)  {$y$};
        \node at (1, -1/2) (edgesz1) [draw] {$0$};
        \draw (edgesx) -- (vertexu) [color = red];
      	\draw (vertexu) -- (vertexv) [color = red];
      	\draw (vertexv) -- (edgesy) [color = red];
      	\draw (vertexv) -- (edgesz1) [color = red];
\end{tikzpicture}
\quad = \quad
\begin{tikzpicture}[baseline=-15pt,scale = 1]
		\node at (-1,0) (edgesx) {$x$};
        \node at ( 0,0) (vertexu) [draw] {$A$};
        \node at (0,-1) (vertexv) [draw] {$B$};
        \node at (-1,-1) (edgesy)  {$y$};
        \node at (1, -1/2) (edgesz1) [draw] {$0$};
        \draw (edgesx) -- (vertexu) [color = red];
      	\draw (vertexu) -- (vertexv) [color = red];
      	\draw (vertexv) -- (edgesy) [color = red];
      	\draw (vertexu) -- (edgesz1) [color = red];
\end{tikzpicture}
$$

\end{thm}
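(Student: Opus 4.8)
\emph{Proof plan.} Write $L$ and $R$ for the left- and right-hand sides of the asserted identity, viewed as elements of $\mathcal N$, and let $\ell$ denote the new pendant vertex: it has framing $0$, all of its incident edges are red, and it is joined only to $v$ in $L$ and only to $u$ in $R$. The plan is to deduce the identity from the relations of Proposition~\ref{prop:rels} applied to the red edge $uv$, used in two complementary ways. Throughout I keep the notation of~(\ref{fig:4elred}): $a$ is the set of neighbours of $u$ other than $v$, $c$ the set of neighbours of $v$ other than $u$, and $b$ the ``shared'' bucket. For simplicity I first describe the case in which $x$ and $y$ are disjoint; the general case is discussed at the end.

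First I would apply~(\ref{fig:4elred}) to the edge $uv$ with the pendant $\ell$ placed in the shared bucket, $b=\{\ell\}$, and with $a=x$, $c=y$. Since the left-hand sum of~(\ref{fig:4elred}) ranges over the two ways of distributing $b=\{\ell\}$ between $u$ and $v$, it is exactly $R+L$; the right-hand side is then
$$Q:=(-1)^{B}\sum_{y=y_1\sqcup y_2}\bigl[\,u\ \text{has framing}\ A+B\ \text{and neighbours}\ x\cup y_2,\quad v\ \text{has framing}\ B\ \text{and neighbours}\ y_1\cup\{\ell\}\,\bigr],$$
an element in which $\ell$ is still a framing-$0$ pendant, now attached to $v$. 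Hence $L+R=Q$ in $\mathcal N$. Next I would apply~(\ref{fig:4elred}) with $b=\emptyset$ twice: once to the edge $uv$ in $L$ (so $a=x$, $c=y\cup\{\ell\}$, and the left-hand side of the relation is $L$), and once with $a=x\cup\{\ell\}$, $c=y$ (left-hand side $R$). In the first run the right-hand sum is taken over $y_1\sqcup y_2=y\cup\{\ell\}$; splitting it according to whether $\ell$ lands in the $u$-bucket $y_2$ or the $v$-bucket $y_1$, the $\ell\in y_2$ part is precisely the right-hand sum produced by the second run, while the $\ell\in y_1$ part is precisely $Q$. Subtracting the two runs, everything cancels except $Q$, giving $L-R=Q$.

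Comparing the two, $L+R=Q=L-R$, so $2R=0$ and $R=0$ in $\mathcal N$. The identical computation applies to any graph possessing a framing-$0$ pendant attached to a vertex of degree at least two: applying~(\ref{fig:4elred}) to any red edge joining that vertex to another of its neighbours (once with the pendant in the shared bucket, once not) shows that such a graph represents $0$ in $\mathcal N$. Now both $L$ and $R$ are of this form --- $\ell$ is a framing-$0$ pendant attached, respectively, to $v$ and to $u$, and each of $u,v$ is adjacent to the other and hence has degree $\ge 2$ --- so $L=R=0$; in particular $L=R$.

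The hard part is the bookkeeping in the two applications of~(\ref{fig:4elred}), not the idea. When $x$ and $y$ share vertices one must take $a=x\setminus y$, $c=y\setminus x$ and $b\supseteq x\cap y$, so that the left-hand sums of~(\ref{fig:4elred}) acquire extra terms coming from redistributing $x\cap y$ between $u$ and $v$; the point to check is that these extra terms occur identically in the two applications and therefore drop out of the comparison $L+R=Q=L-R$. This is exactly the inclusion--exclusion already carried out in the proof of Proposition~\ref{prop:rels}, so no new ingredient is needed, but it is the step that must be written out with care (and the only place where characteristic $0$, or at least characteristic $\ne 2$, is used, in passing from $2R=0$ to $R=0$).
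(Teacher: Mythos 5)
There is a genuine gap, and it lies in your reading of the relation~(\ref{fig:4elred}). The sums $\sum_{b=b_1\cup b_2}$ and $\sum_{c=c_1\cup c_2}$ there are \emph{not} over disjoint partitions: each vertex of the shared bucket may be joined to $u$ only, to $v$ only, \emph{or to both} (three states per vertex, coming from the expansion ``black edge $=$ red edge $+$ deleted edge'' in the proof of Proposition~\ref{prop:rels}; this is also visible in the paper's own use of the relation, where a one-element bucket produces three terms on the left). Consequently, in your first application (leaf $\ell$ in the bucket $b=\{\ell\}$) the left-hand side is not $L+R$ but $L+R+M$, where $M$ is the graph with $\ell$ joined to both $u$ and $v$; and in your second application the splitting of $y\cup\{\ell\}=c_1\cup c_2$ has a third class of terms with $\ell$ attached to both vertices. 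With these extra terms the comparison no longer closes: instead of $L+R=Q=L-R$ one gets identities of the shape $L+R+M=Q$ and $L-R=Q+T$, which do not yield the statement without further relations controlling $M$ and $T$.

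The conclusion you draw is also a red flag that something must be wrong: you deduce $L=R=0$, and more generally that any graph with a $0$-framed pendant attached to a vertex of degree at least two vanishes in $\mathcal N$. That is false and contradicts the paper itself: trees with zero framings are obtained by repeated $0$-framed leaf attachment and generate a one-dimensional primitive subspace of the forest algebra in every grading, and the framed chromatic polynomial (and $\mathcal W$) are nonzero on them --- already the path on three vertices with zero framings is a counterexample to your vanishing claim. The intended argument is different in where the relation is applied: not at the edge $uv$, but at the two leaf edges. Apply~(\ref{fig:4elred}) once to the edge joining the leaf (framing $C$) to $v$, with shared bucket $\{u\}$, and once to the edge joining the leaf to $u$, with shared bucket $\{v\}$; subtracting, the common terms (leaf joined to both, with and without the $uv$ edge) cancel, and the two right-hand sides coincide exactly when $C=0$, giving $L=R$ without asserting that either side vanishes.
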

\begin{proof}
The relations applied to the edge connecting the vertex of framing $C$ to the vertex of framing $B$ read:
\begin{multline*}
\begin{tikzpicture}[baseline=-15pt,scale = 1]
		\node at (-1,0) (edgesx) {$x$};
        \node at ( 0,0) (vertexu) [draw] {$A$};
        \node at (0,-1) (vertexv) [draw] {$B$};
        \node at (-1,-1) (edgesy)  {$y$};
        \node at (1, -1/2) (edgesz1) [draw] {$C$};
        \draw (edgesx) -- (vertexu) [color = red];
      	\draw (vertexu) -- (vertexv) [color = red];
      	\draw (vertexv) -- (edgesy) [color = red];
      	\draw (vertexu) -- (edgesz1) [color = red];
      	\draw (vertexv) -- (edgesz1) [color = red];
\end{tikzpicture}
\quad + \quad
\begin{tikzpicture}[baseline=-15pt,scale = 1]
		\node at (-1,0) (edgesx) {$x$};
        \node at ( 0,0) (vertexu) [draw] {$A$};
        \node at (0,-1) (vertexv) [draw] {$B$};
        \node at (-1,-1) (edgesy)  {$y$};
        \node at (1, -1/2) (edgesz1) [draw] {$C$};
        \draw (edgesx) -- (vertexu) [color = red];
      	\draw (vertexu) -- (vertexv) [color = red];
      	\draw (vertexv) -- (edgesy) [color = red];
      	\draw (vertexv) -- (edgesz1) [color = red];
\end{tikzpicture}
\quad + \quad
\begin{tikzpicture}[baseline=-15pt,scale = 1]
		\node at (-1,0) (edgesx) {$x$};
        \node at ( 0,0) (vertexu) [draw] {$A$};
        \node at (0,-1) (vertexv) [draw] {$B$};
        \node at (-1,-1) (edgesy)  {$y$};
        \node at (1, -1/2) (edgesz1) [draw] {$C$};
        \draw (edgesx) -- (vertexu) [color = red];
      	\draw (vertexv) -- (edgesy) [color = red];
      	\draw (vertexu) -- (edgesz1) [color = red];
      	\draw (vertexv) -- (edgesz1) [color = red];
\end{tikzpicture}
\\ = (-1)^C \quad
\begin{tikzpicture}[baseline=-15pt,scale = 1]
		\node at (-1,0) (edgesx) {$x$};
        \node at ( 0,0) (vertexu) [draw] {$A$};
        \node at (0,-1) (vertexv) [draw] {$B+C$};
        \node at (-1,-1) (edgesy)  {$y$};
        \node at (3/2, -1/2) (edgesz1) [draw] {$C$};
        \draw (edgesx) -- (vertexu) [color = red];
      	\draw (vertexv) -- (edgesy) [color = red];
     	\draw (vertexv) -- (edgesz1) [color = red];
      	\draw (vertexu) -- (edgesz1) [color = red];
\end{tikzpicture},
\end{multline*}
that implies
\begin{multline*}
\begin{tikzpicture}[baseline=-15pt,scale = 1]
		\node at (-1,0) (edgesx) {$x$};
        \node at ( 0,0) (vertexu) [draw] {$A$};
        \node at (0,-1) (vertexv) [draw] {$B$};
        \node at (-1,-1) (edgesy)  {$y$};
        \node at (1, -1/2) (edgesz1) [draw] {$C$};
        \draw (edgesx) -- (vertexu) [color = red];
      	\draw (vertexu) -- (vertexv) [color = red];
      	\draw (vertexv) -- (edgesy) [color = red];
      	\draw (vertexv) -- (edgesz1) [color = red];
\end{tikzpicture}
\quad - \quad
\begin{tikzpicture}[baseline=-15pt,scale = 1]
		\node at (-1,0) (edgesx) {$x$};
        \node at ( 0,0) (vertexu) [draw] {$A$};
        \node at (0,-1) (vertexv) [draw] {$B$};
        \node at (-1,-1) (edgesy)  {$y$};
        \node at (1, -1/2) (edgesz1) [draw] {$C$};
        \draw (edgesx) -- (vertexu) [color = red];
      	\draw (vertexu) -- (vertexv) [color = red];
      	\draw (vertexv) -- (edgesy) [color = red];
     	\draw (vertexu) -- (edgesz1) [color = red];
\end{tikzpicture}
\\ = (-1)^C \quad \left(
\begin{tikzpicture}[baseline=-15pt,scale = 1]
		\node at (-1,0) (edgesx) {$x$};
        \node at ( 0,0) (vertexu) [draw] {$A$};
        \node at (0,-1) (vertexv) [draw] {$B+C$};
        \node at (-1,-1) (edgesy)  {$y$};
        \node at (3/2, -1/2) (edgesz1) [draw] {$C$};
        \draw (edgesx) -- (vertexu) [color = red];
      	\draw (vertexv) -- (edgesy) [color = red];
     	\draw (vertexv) -- (edgesz1) [color = red];
      	\draw (vertexu) -- (edgesz1) [color = red];
\end{tikzpicture}
\quad - \quad
\begin{tikzpicture}[baseline=-15pt,scale = 1]
		\node at (-1,0) (edgesx) {$x$};
        \node at ( 0,0) (vertexu) [draw] {$A + C$};
        \node at (0,-1) (vertexv) [draw] {$B$};
        \node at (-1,-1) (edgesy)  {$y$};
        \node at (3/2, -1/2) (edgesz1) [draw] {$C$};
        \draw (edgesx) -- (vertexu) [color = red];
      	\draw (vertexv) -- (edgesy) [color = red];
     	\draw (vertexv) -- (edgesz1) [color = red];
      	\draw (vertexu) -- (edgesz1) [color = red];
\end{tikzpicture}
\right).
\end{multline*}
For $C = 0$ the right-hand part vanishes identically.
\end{proof}

Recall, that the \emph{forest algebra} $\mathcal T$~\cite{CDL3} is defined as the subalgebra of $\mathcal L$ generated by trees with all the vertices having framing 0. Clearly, its image under the isomorphism $\phi$ is the subalgebra of $\mathcal N$ also generated by trees with all the vertices framing 0 and all the edges colored red. The previous proposition trivially implies the structural result of~\cite{CDL3}:

\begin{thm}
The forest algebra $\mathcal T$ is a subbialgebra of $\mathcal N$ with the dimension one primitive subspace in every grading.
\end{thm}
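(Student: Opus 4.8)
The plan is to deduce everything from the leaf-attachment theorem just proved, together with the standard structure theory of graded connected bialgebras.

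First I would check that $\mathcal{T}$ is closed under the coproduct. The image in $\mathcal{N}$ of any connected graph with all edges red is primitive: in a splitting of its vertex set with no red edge across the cut one of the two parts must be empty, so $\Delta$ of such a class is $\Gamma\otimes 1+1\otimes\Gamma$. In particular the class of a tree with framing $0$ and red edges is primitive, and since $\mathcal{T}$ is generated as an algebra by such trees and $\Delta$ is an algebra map, $\Delta(\mathcal{T})\subset\mathcal{T}\otimes\mathcal{T}$. Hence $\mathcal{T}$ is a graded connected sub-bialgebra of $\mathcal{N}$; being commutative, cocommutative and of finite type, by the Milnor--Moore theorem~\cite{MM} it is the symmetric algebra on $P\mathcal{T}$, and $\dim P_n\mathcal{T}$ equals the dimension of the space of indecomposables of $\mathcal{T}$ in degree $n$.

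The key step is to show that in $\mathcal{N}$ every tree on $n$ vertices with framing identically $0$ and all edges red equals the class of the star $K_{1,n-1}$. Fix such a tree $T$ and a vertex $c$. If $T$ is not the star centred at $c$, choose a vertex at maximal distance from $c$; it is a leaf $\ell$ attached to a vertex $v$ with $d(v,c)\ge 1$, and $v$ has a neighbour $u$ with $d(u,c)=d(v,c)-1$. Since $T$ is a tree, deleting the edge $uv$ disconnects it, so all other neighbours of $u$ lie in one component and all other neighbours of $v$ (and $\ell$) in the other; therefore the two pictures in the leaf-attachment theorem — with $x$ the other neighbours of $u$, $y$ the other neighbours of $v$, all framings $0$ — have identical omitted parts, and the theorem yields $T=T'$, where $T'$ is $T$ with $\ell$ re-attached to $u$. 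Under this move the monovariant $\sum_{w}d(w,c)$ drops by exactly $1$ (only the parent of the leaf $\ell$ changes), so after finitely many moves one reaches $K_{1,n-1}$. Denote the resulting common class by $t_n$. Now forests with at least two components are decomposable, so the indecomposables of $\mathcal{T}$ in degree $n$ are spanned by the classes of trees on $n$ vertices, that is by $t_n$, whence $\dim P_n\mathcal{T}\le 1$; since $t_n$ is primitive, it can be decomposable only if it is zero. To see $t_n\neq 0$ I would evaluate the framed chromatic polynomial introduced above: contracting the edges of the tree at leaves one at a time, each contraction contributes the sign $(-1)^{0+0+0}=1$ and preserves the framing $0$, so its value on $t_n$ is its value on a single $0$-framed vertex, namely the indeterminate $q\neq 0$; alternatively one may quote the dimension count of~\cite{CDL3}. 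Thus $t_n$ is a nonzero primitive and $\dim P_n\mathcal{T}=1$ for every $n\ge 1$.

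The step I expect to be the crux is the straightening argument: one must set up the subsets $x$ and $y$ in the leaf-attachment identity so that the two diagrams genuinely agree outside the moved leaf — which works precisely because a tree has no cycles — and one needs a monovariant, here $\sum_w d(w,c)$, to guarantee that the process terminates at the star. The remaining steps are routine bialgebra bookkeeping.
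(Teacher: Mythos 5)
Your argument is correct and follows essentially the same route as the paper: the paper's (very brief) proof likewise deduces the theorem from the leaf-attachment identity, observing that any tree arises from a single vertex by leaf attachments and that the resulting class is independent of the choices, which is exactly what your star-normal-form straightening with the monovariant $\sum_w d(w,c)$ makes precise. Your additional check that $t_n\neq 0$ via the framed chromatic polynomial (left implicit in the paper, which cites~\cite{CDL3}) is a welcome completion rather than a deviation.
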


Indeed, it says that any tree can be obtained by a sequence of leaf attachments to the graph on a single vertex. The resulting tree does not depend on the choice of the vertices which we attach the leaves at each step of the construction.

Also, we deduce the following:

\begin{thm}
There is a well-defined action of $\mathcal T$ on $P\mathcal N$ defined on the generators as follows. For a tree $T$ with the framing of all the vertices identically equal to 0 and a connected graph $\Gamma$, choose a vertex $v$ of $T$ and a vertex $w$ of $\Gamma$, and join the chosen vertices $v$ and $w$ by an edge.
\end{thm}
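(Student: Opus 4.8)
The plan is to deduce the statement from two structural facts already established in this section: the leaf-attachment identity above, and the fact that all trees on a given number of vertices represent one and the same element of $\mathcal T\subset\mathcal N$.

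First I would isolate the elementary operation behind the action. For a connected graph $\Gamma$ with all edges red and a vertex $w$ of $\Gamma$, let $\Gamma^+_w$ be the graph obtained by adding one new vertex of framing $0$ joined to $w$ by a red edge. The leaf-attachment identity gives $[\Gamma^+_w]=[\Gamma^+_{w'}]$ in $\mathcal N$ whenever $w$ and $w'$ are joined by an edge, so propagating along a path shows the class $L([\Gamma]):=[\Gamma^+_w]$ does not depend on $w$; as $\Gamma^+_w$ is connected, $L([\Gamma])\in P\mathcal N$. The step requiring genuine care is that $L$ descends to an endomorphism of $P\mathcal N$, i.e. kills the relations: by Proposition~\ref{prop:rels} together with the description of $I_\mathcal C$, the relations of $P\mathcal N$ are spanned by the relation generators both of whose constituent graphs are connected, and in each such generator the two graphs share a common vertex (in the notation of Proposition~\ref{prop:rels}, the vertex of framing $B$, along with all of $a,b,c$); adjoining a framing-$0$ leaf there yields again a generator of the same shape, the new leaf being absorbed into $c$, and similarly for the generators of $I_\mathcal C$. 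Since $L$ may be computed by attaching the leaf at that common vertex, it follows that $L$ sends relations to relations, hence is well defined on $P\mathcal N$.

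Next I would identify how a tree acts. Given a tree $T$ on $n$ vertices with framing $0$, a vertex $v\in T$, and a vertex $w$ of a connected red graph $\Gamma$, the graph obtained by joining $v$ to $w$ by an edge can be built from $\Gamma$ by $n$ successive leaf attachments: first attach the image of $v$ at $w$, then attach the remaining vertices of $T$ one at a time, each as a leaf at its predecessor in a pruning order of $T$ rooted at $v$. By the leaf-attachment identity each intermediate choice of attachment vertex is immaterial, and by the structural theorem for $\mathcal T$ above the resulting element depends neither on the pruning order, nor on $T$, nor on $v$, nor on $w$: it equals $L^{n}([\Gamma])$. So ``attach the tree $T$'' acts on $P\mathcal N$ as the operator $L^{n}$, $n=|V(T)|$.

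Finally I would assemble the module structure. The common class $\theta_n$ of all trees on $n$ vertices gives generators $\theta_1,\theta_2,\dots$ of $\mathcal T$ freely as a polynomial algebra (all trees on $n$ vertices being equal, the forests form a basis in each degree), so, since the operators $L^n$ pairwise commute, the assignment $\theta_n\mapsto L^{n}$ extends to an algebra homomorphism $\rho\colon\mathcal T\to\operatorname{End}(P\mathcal N)$; on a forest $F=T_{n_1}\sqcup\dots\sqcup T_{n_k}$ it gives $\rho(F)=L^{n_1+\dots+n_k}$, which by the previous step is exactly the result of joining each component of $F$ to $\Gamma$ by one edge, in any order and at any vertices. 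In particular the action factors through the polynomial ring in the single variable $\theta_1$ (``leaf addition'') via $\theta_n\mapsto\theta_1^{\,n}$. I expect the main obstacle to be precisely the well-definedness claim of the second paragraph — unwinding the relations of $P\mathcal N$ at the level of $G_\mathcal C$ and verifying, by cases on where the new leaf sits relative to the distinguished red edge of a relation generator, that leaf addition preserves them; everything else is formal given the leaf-attachment and forest theorems.
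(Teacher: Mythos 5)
Your overall route coincides with the paper's: the theorem is deduced from the leaf-attachment identity together with the forest-algebra theorem (the paper offers no further argument), so your extra attention to well-definedness with respect to the relations of $P\mathcal N$ is the right instinct, and your reduction of the action of any tree on $n$ vertices to the operator $L^{n}$ is exactly the intended mechanism. However, the concrete verification you give for the delicate step is incorrect as stated. In a generator of the form (\ref{fig:4elred}) the set $a$ is joined to the upper vertex on both sides and is never split, $b$ is split only in the left-hand sum, and $c$ is split only in the right-hand sum (between the vertex of framing $B$ and the vertex of framing $A+B$). Consequently, attaching the new framing-$0$ leaf to the vertex of framing $B$ in every term does \emph{not} produce the generator with $c$ replaced by $c\cup\{\ell\}$: that generator contains additional right-hand terms in which $\ell$ is joined to the vertex of framing $A+B$, and these are not accounted for. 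So the step you yourself single out as the main obstacle fails in the form you propose.

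The repair is local: attach the leaf at the other endpoint of the distinguished red edge, i.e.\ at the vertex whose framing changes from $A$ to $A+B$, so that the leaf is absorbed into $a$. Since $a$ is attached to that vertex, unsplit, in both sums, this literally yields the generator of shape (\ref{fig:4elred}) with $a\cup\{\ell\}$ in place of $a$, and it also covers the two-vertex generators where $a=b=c=\emptyset$; alternatively, when the graphs possess a vertex outside $\{A,B\}\cup a\cup b\cup c$, one may attach the leaf there. (Your aside about ``the generators of $I_\mathcal C$'' is vacuous: the graphs with all edges red form a basis of $G_\mathcal C/I_\mathcal C$, so the only relations among the generators of $P\mathcal N$ come from $\mathcal F_\mathcal C$, as the paper notes.) With the attachment vertex corrected, the rest of your argument --- independence of the attachment point by propagation along a path in a connected graph, identification of the tree action with $L^{n}$, and the extension to an algebra action of $\mathcal T$ --- is sound and spells out what the paper leaves implicit.
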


In particular, we see, that the subspaces $PW\mathcal L$ have at least one non-trivial generator in every grading component: it can be obtained by the action of a tree on $n$ vertices on a single vertex graph with the framing of the vertex 1. For every natural $n$ the obtained element is non-zero, as the framed chromatic polynomial~\cite{KarevHopf} takes on it a non-zero value.

The leaf attachment operation is an intersection graph counterpart of the operation of bubble insertion~\cite{CDbook}. P. Vogel in~\cite{Vo} provides a construction of a non-trivial element of the kernel of the operation of bubble insertion. It would be interesting to know if the described action of $\mathcal T$ on $P \mathcal N$ is free.

\section{A 4-invariant related to the number of 3-colorings of a graph.}

As the elements of the Lando bialgebra can be either expressed in terms of graphs whose all edges are black or in terms of graphs whose all edges are red, the same invariant of graphs may produce two different 4-invariants.

For instance, it is known that the values of the chromatic polynomial (that is, the numbers of $k$-colorings of the vertices of a graph) are 4-invariants of graphs \cite{CDL3, LandoKazaryan}. On the other hand, the number of 3-colorings of a graph may be used to produce a new 4-invariant as follows.

For a generating element $\Gamma$ of $\mathcal N$ represented by a graph with \emph{red edges only}, define $\mathcal W(\Gamma)$ to be equal the number of proper 3-colorings of $\Gamma$ multiplied by $2^{-\chi(\Gamma)}(-1)^f$, where $\chi(\Gamma)$ is the Euler characteristics of $\Gamma$, and $f$ is the sum of framings of all the vertices of $\Gamma$.

\begin{thm}
The function $\mathcal W$ extends by linearity to a 4-invariant.
\end{thm}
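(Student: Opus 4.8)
The plan is to verify that $\mathcal{W}$, as prescribed on the classes of the graphs with all edges red, respects the relations of $\mathcal{N}$. These classes span $G_{\mathcal C}/I_{\mathcal C}$ and are in fact a basis of it (the transform relating them to the basis of all‑black graphs is triangular in the number of edges, as one sees from the map $\psi$ in the proof of the isomorphism $\phi$), so the prescription already defines a linear functional on $G_{\mathcal C}/I_{\mathcal C}$; it descends to $\mathcal{N}=(G_{\mathcal C}/I_{\mathcal C})/\mathcal{F}_{\mathcal C}$, hence is a $4$‑invariant, precisely when it kills $\mathcal{F}_{\mathcal C}$. By Proposition~\ref{prop:rels}, $\mathcal{F}_{\mathcal C}$ is spanned by the elements~\eqref{fig:4elred}: that family, taken over all choices of the unshown part, is already closed under disjoint union with an arbitrary colored graph, so it spans the two‑sided ideal it generates. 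It therefore suffices to evaluate $\mathcal{W}$ on a single element~\eqref{fig:4elred} and obtain $0$, or equivalently, transporting along $\phi$, to check $\mathcal{W}$ on an arbitrary $4$‑element $\Gamma-\Gamma'_{uv}-(-1)^{f(v)}(\tilde\Gamma_{uv}-\tilde\Gamma'_{uv})$ — the purely graph‑theoretic form I would work with.

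Next I would rewrite $\mathcal{W}$ as a colour sum. Modulo $I_{\mathcal C}$ a black edge equals the same edge coloured red plus the same edge erased, so a graph with all edges black equals the sum, over all subsets $S$ of its edge set, of the same graph with precisely the edges of $S$ repainted red; applying $\mathcal{W}$ and writing $P_3(H)=\sum_{c}\prod_{e}[\,c\text{ bichromatic on }e\,]$ yields a local state sum
\[
  \mathcal{W}\bigl(\iota(\Gamma)\bigr)=(-1)^{f(\Gamma)}\,2^{-|V(\Gamma)|}\sum_{c\colon V(\Gamma)\to\{1,2,3\}}\ \prod_{e=xy\in E(\Gamma)} w\bigl(c(x),c(y)\bigr),
\]
where the weight $w$ takes one value on a monochromatic edge and another on a bichromatic one, and equals its monochromatic value on an absent edge; the factor $2^{-\chi}$ in the definition of $\mathcal{W}$ is exactly what makes this repackaging come out with no stray powers of $2$. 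The same formula, with $w$ replaced by the corresponding ``red'' weight on the distinguished edge, governs the mixed graphs occurring in~\eqref{fig:4el}.

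Feeding this into the $4$‑element, two simplifications occur. First, all four graphs share the vertex set, and passing from $\Gamma$ to $\tilde\Gamma_{uv}$ changes the framing of $u$ by $f(v)$, so $(-1)^{f(\tilde\Gamma_{uv})}=(-1)^{f(v)}(-1)^{f(\Gamma)}$; together with the prefactor $(-1)^{f(v)}$ and the common $2^{-|V|}$ all signs and powers cancel, and the required identity becomes $J(\Gamma)-J(\Gamma'_{uv})=J(\tilde\Gamma_{uv})-J(\tilde\Gamma'_{uv})$ for $J(H):=\sum_{c}\prod_{e=xy}w(c(x),c(y))$. Second, since $\Gamma'_{uv}=\Gamma\setminus uv$ and $w$ agrees on monochromatic and absent edges, $J(\Gamma)-J(\Gamma'_{uv})$ collapses to a constant times $\sum_{c(u)\ne c(v)}\prod_{e\in E(\Gamma'_{uv})}w$, and likewise for the tilded pair; so everything reduces to showing that the colour sum $\sum_{c:\,c(u)\ne c(v)}\prod_{e}w$ is the same for $\Gamma'_{uv}$ and $\tilde\Gamma'_{uv}$. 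These two graphs differ only in the edges joining $u$ to $N_\Gamma(v)\setminus\{u\}$, all of which are toggled, so that the common neighbours of $u$ and $v$ and the $v$‑only neighbours of $u$ have exchanged their adjacency to $u$; one also records the identity $\Gamma/uv=\tilde\Gamma_{uv}/uv$ of contractions, which is what underlies the $4$‑invariance of the ordinary chromatic polynomial.

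The last step — comparing those two colour sums — is the heart of the matter and is where I expect the real work. I would fix $c(v)$ and $c|_{V\setminus\{u\}}$, sum over $c(u)=\alpha\ne c(v)$, and collect the edge factors incident to $u$ and those incident to $v$ (the latter involving exactly the same two families of neighbours); the difference of the two sides then becomes an alternating sum, over the two available values of $\alpha$, of products of local weights over those neighbour families. For the plain chromatic polynomial $w$ simply annihilates any colouring with a monochromatic edge, and $\Gamma/uv=\tilde\Gamma_{uv}/uv$ finishes the proof; here $w$ is a genuine two‑valued weight, so that identity by itself is not enough, and one must in addition invoke the $S_3$‑symmetry of the colour set — permuting the three colours preserves properness and all weights — to see that the residual partial sums depend only on the colour partitions of the relevant vertex subsets, which is what makes the asymmetry between the two neighbour families disappear after summation over the remaining colours. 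This combinatorial identity is the only point that genuinely requires checking; everything else is bookkeeping.
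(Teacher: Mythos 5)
Your reduction bookkeeping is fine (the sign from the framing change, the collapse of $J(\Gamma)-J(\Gamma'_{uv})$ to a constant times a sum over colourings with $c(u)\ne c(v)$, the observation that it suffices to kill the 4-elements), but the proposal stops exactly at the point where the theorem actually lives, and the justification you sketch for that last step cannot work as stated. The identity you need — that $\sum_{c:\,c(u)\ne c(v)}\prod_{e\in E(\Gamma'_{uv})}w$ equals the same sum for $\tilde\Gamma'_{uv}$ — is \emph{not} a consequence of the $S_3$-symmetry of the colour set together with the toggling/contraction structure: any argument using only those ingredients would apply to an arbitrary symmetric two-valued weight, and for generic weights the identity is false. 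Concretely, take $\Gamma$ the path $u\!-\!v\!-\!w$ (framings $0$), so $\Gamma'_{uv}$ is the single edge $vw$ plus the isolated vertex $u$, and $\tilde\Gamma'_{uv}$ is the path $v\!-\!w\!-\!u$. With weight $1$ on monochromatic and $3$ on bichromatic edges (which is what the literal normalization $2^{-\chi}$ of the definition produces, since a red edge then carries weight $2[c(x)\ne c(y)]$ and a black edge $1+2[c(x)\ne c(y)]$), the left-hand sum is $3\cdot2\cdot(1+3+3)=42$ while the right-hand sum is $6\cdot(3+3+9)=90$. The identity, and hence the theorem, only holds for the specific weights $+1$ (monochromatic) and $-1$ (bichromatic), i.e.\ with the normalization $(-2)^{-\chi(\Gamma)}(-1)^f$ that the paper's proof actually uses (the $2^{-\chi}$ in the statement of the definition is inconsistent with the proof); with those weights the example gives $-6=-6$. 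So the ``heart of the matter'' you defer is genuinely weight-sensitive, and your proposal contains no argument that uses the special values $\pm1$ (equivalently, the number $3$ of colours), which is precisely what has to be exploited.

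For comparison, the paper does not pass to the all-black Potts-type state sum at all: it works in the red presentation with the relations~(\ref{fig:4elred}), encodes the two sums by states $U,V,UV$ of the vertices of $a\cup b$, uses the factor $(-2)^{-\chi}$ to cancel every $UV$-state colouring against the two colourings in states $U$ and $V$ where the corresponding vertex gets the third colour $H$ (this is exactly where $-2+1+1=0$ enters, the step your generic-weight formulation misses), and finishes with an explicit bijection between the remaining colourings: keep $u$, $v$ and all $F$-coloured vertices, and swap $E\leftrightarrow H$ elsewhere. Your black-picture approach could in principle be completed, but you would have to supply an analogue of that cancellation-plus-bijection argument (or a character-sum computation specific to three colours and $\pm1$ weights); as written, the central identity is asserted, not proved.
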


\begin{proof}
We have to check that the linear extension of $\mathcal W$ to the ideal $\mathcal F_{\mathcal{CC}}$ vanishes identically.

The map that multiplies a framed graph by $(-1)^f$ and sets the framing of all the vertices to 0 is a well-defined map on the quotient modulo the 4-elements. so, for simplicity, we assume, that from now on all the vertices have framing 0.

The elements~(\ref{fig:4elred}) can be generated as follows. The initial data is the tuple $(\Gamma, v, a, b)$, where $\Gamma$ is a graph, $v$ -- a vertex of $\Gamma$, and $a,b$ are two disjoint subsets of vertices of $\Gamma$ such that no vertex of $a\cup b$ is adjacent to $v$. 

Attach a leaf $u$ to $v$. Now, every vertex in $a\cup b$ can have 3 possible states, which we call state $U$, state $V$ and state $UV$.

For the collection of states $S\colon a\cup b \to \{U,V,UV\}$ form a new graph $\Gamma_S$ as follows: connect by edges all the vertices of the state $U$ with the vertex $u$, all the vertices of the state $V$ to the vertex $v$, and all the vertices of the state $UV$ to both the vertices $u$ and $v$. Now the elements~\ref{fig:4elred} are the linear combinations
\begin{gather}\label{eq:colors}
 \sum_{\{S\colon a\cup b \to \{U,V,UV\}\,|\, S(b) = \{U\}\}} \Gamma_S - \sum_{\{S'\colon a\cup b \to \{U,V,UV\}\,|\, S'(a) = \{U\}\}} \Gamma_{S'}.
\end{gather}
Now consider proper colorings of the vertices of the graphs $\Gamma_S$ and $\Gamma_{S'}$ in three colors which we denote $E, F,H$. Without loss of generality suppose, the vertex $u$ is colored  $E$, and the vertex $v$ is colored $F$. For any vertex $w\in a$ we have
\begin{itemize}
\item if $w$ is in the state $UV$, then it can only be colored $H$,
\item if $w$ is in the state $U$, it can be either colored $F$ or $H$,
\item if $w$ is in the state $V$, it can be either colored $E$ or $H$.
\end{itemize}

Notice, that the number of edges of a graph with a vertex $w$  in the state $UV$ is one more than those of graphs with the corresponding vertex in the states $U$ or $V$. Due to the factor $(-2)^{-\chi(\Gamma)}(-1)^f$ in the definition of $\mathcal W$, with all the states of elements $a\cup b - \{w\}$ fixed, the colorings of $w$ in the state $UV$ cancel out the colorings with $w$ colored $H$ being in the state $U$ and $w$ colored $H$ being in the state $V$.

It means that having fixed the colorings of $u$ and $v$ the sum~(\ref{eq:colors}) reduces to verification of the identity
$$
 \sum_{\{S\colon a\cup b \to \{U,V\}\,|\, S(b) = \{U\}\}} \mathcal W'_{a}(\Gamma_S) = \sum_{\{S'\colon a\cup b \to \{U,V\}\,|\, S'(a) = \{U\}\}} \mathcal W'_b(\Gamma_{S'}) ,
$$
where $W'_{a}(\Gamma_S)$ for $S\colon a\cup b \to \{U,V\}$ with $S(b) = \{U\}$ is the number of proper colorings of the graph $\Gamma_S$, with vertex $u$ colored $E$, vertex $v$ colored $F$,  the vertex $w \in a$ colored $F$ if it is in the state $U$, and colored $E$, if it is in the state $V$. Notice, that due to the structure of $\Gamma_S$, the vertices of $b$ can only be colored $F$ or $H$.
The definition of $W'_b$ is essentially the same but with the roles of $a$ and $b$ interchanged.

But the identity to verify holds true.  Namely, the sets of colorings that contribute to left-hand side are in the bijection with the set of coloring that contributes to the right-hand side: keep the coloring of $u$, $v$, and all the vertices colored $F$, but for all other vertices swap the colors $E$ and $H$. After the colors swap, the colorings of the vertices of $b$ uniquely define their states.

\end{proof}

We mention one evident property of the 4-invariant $\mathcal W$. Namely, given two graphs $\Gamma$ and $\Gamma'$, a vertex $u$ of $\Gamma$ and a vertex $v$ of $\Gamma'$ denote $\Gamma \nabla_{uv} \Gamma'$ the result of identification of vertices $u$ and $v$. Clearly
$$ \mathcal W (\Gamma \nabla_{uv} \Gamma') = -\frac 23 \mathcal W(\Gamma) \mathcal W(\Gamma').$$ In particular, the operation of the leaf attachment multiplies the value of $\mathcal W$ by 2. It is known \cite{Vai} that  $\mathfrak {sl}_2$-weight system has a similar multiplicativity property with respect to the vertex-multiplication.

As a conclusion, we notice that for $n=4$ the grading component $P\mathcal N_4$ has at least 4 linearly independent elements represented by graphs with all the edges colored red with supports given by a chain $P_4$ and a cycle $C_4$ and various framings of their vertices. They differ by the values of the framed chromatic polynomial and invariant $\mathcal W$ on them. The existence of the action of the tree algebra on $P\mathcal N$ implies, that for any $n\ge 4$ the dimension $\dim P\mathcal N_n$ is greater or equal to 4.

{\small\bibliography{commat}}

\begin{thebibliography}{10}

\bibitem{Arnold}
V.~I. Arnol'd.
\newblock {\em Plane curves, their invariants, perestroikas and classifications}, volume~21 of {\em Adv. Soviet Math.}
\newblock Amer. Math. Soc., Providence, RI, 1994.
\newblock With an appendix by F. Aicardi.

\bibitem{BL}
J.~S. Birman and X.-S. Lin.
\newblock Knot polynomials and {V}assiliev's invariants.
\newblock {\em Invent. Math.}, 111(2):225--270, 1993.

\bibitem{CDL3}
S.~Chmutov, S.~Duzhin, and S.~K. Lando.
\newblock Vassiliev knot invariants iii. forest algebra and weighted graphs.
\newblock {\em Advances in Soviet Mathematics}, 21:135--145, 1994.

\bibitem{CDbook}
S.~Chmutov, S.~Duzhin, and J.~Mostovoy.
\newblock {\em Introduction to {V}assiliev knot invariants}.
\newblock Cambridge University Press, Cambridge, 2012.

\bibitem{JoniRota}
S.~A. Joni and G.-C. Rota.
\newblock Coalgebras and bialgebras in combinatorics.
\newblock {\em Studies in Applied Mathematics}, 61:93--139, 1979.

\bibitem{KarevHopf}
M.~Karev.
\newblock The space of framed chord diagrams as a {H}opf module.
\newblock {\em J. Knot Theory Ramifications}, 24(3):1550014, 17, 2015.

\bibitem{LandoKazaryan}
M.~E. Kazarian and S.~K. Lando.
\newblock Weight systems and invariants of graphs and embedded graphs.
\newblock {\em Uspekhi Mat. Nauk}, 77(5(467)):131--184, 2022.

\bibitem{LandoHopf}
S.~K. Lando.
\newblock On a {H}opf algebra in graph theory.
\newblock {\em J. Combin. Theory Ser. B}, 80(1):104--121, 2000.

\bibitem{Lando}
S.~K. Lando.
\newblock {$J$}-invariants of ornaments, and framed chord diagrams.
\newblock {\em Funktsional. Anal. i Prilozhen.}, 40(1):1--13, 95, 2006.

\bibitem{LS}
R.~G. Larson and M.~E. Sweedler.
\newblock An associative orthogonal bilinear form for {H}opf algebras.
\newblock {\em Amer. J. Math.}, 91:75--94, 1969.

\bibitem{L}
A.~N. Leznov.
\newblock Graded {L}ie algebras, representation theory, integrable mappings and systems. {N}on-abelian case.
\newblock {\em Nuclear Phys. B}, 543(3):652--672, 1999.

\bibitem{MM}
J.~W. Milnor and J.~C. Moore.
\newblock On the structure of {H}opf algebras.
\newblock {\em Ann. of Math. (2)}, 81:211--264, 1965.

\bibitem{Schmitt}
W.~R. Schmitt.
\newblock Incidence {H}opf algebras.
\newblock {\em J. Pure Appl. Algebra}, 96(3):299--330, 1994.

\bibitem{Sloane1}
N.~J. Sloane.
\newblock Sloane's encyclopedia of integer sequences, a007473.

\bibitem{Sloane2}
N.~J. Sloane.
\newblock Sloane's encyclopedia of integer sequences, a244742.

\bibitem{Sloane3}
N.~J. Sloane.
\newblock Sloane's encyclopedia of integer sequences, a362740.

\bibitem{Vai}
A.~Vaintrob.
\newblock Melvin-{M}orton conjecture and primitive {F}eynman diagrams.
\newblock {\em Internat. J. Math.}, 8(4):537--553, 1997.

\bibitem{Va}
V.~A. Vassiliev.
\newblock {\em Cohomology of knot spaces}, volume~1 of {\em Adv. Soviet Math.}
\newblock Amer. Math. Soc., Providence, RI, 1990.

\bibitem{Vo}
P.~Vogel.
\newblock An email to {S}. {D}uzhin, 2006.

\end{thebibliography}
\EditInfo{July 21, 2023}{January 5, 2024}{Jacob Mostovoy, Sergei Chmutov.}
\end{document}